\newtheorem*{theorem}{Theorem}
\newtheorem{lemma}{Lemma}
\newtheorem*{corollary}{Corollary}
\theoremstyle{definition}
\newtheorem*{remark}{Remark}
\newtheorem*{example}{Example}
\def\XXint#1#2#3{{\setbox0=\hbox{$#1{#2#3}{\int}$ }
\vcenter{\hbox{$#2#3$ }}\kern-.56\wd0}}
\renewcommand{\geq}{\geqslant}
\renewcommand{\leq}{\leqslant}
\renewcommand{\epsilon}{\varepsilon}
\newcommand{\eps} {\varepsilon}
\renewcommand{\phi}{\varphi}
\newcommand{\R}{\mathbb{R}}
\DeclareMathOperator{\sgn}{sgn}
\DeclareMathOperator*{\esssup}{ess\, sup}
\DeclareMathOperator*{\essinf}{ess\, inf}
\DeclareMathOperator{\conv}{\overline{conv}}
\newcommand{\ind}{\mathbbm{1}} 
\newcommand{\loc}{{\mathrm{loc}}}
\begin{document}
\title{Sharp uniqueness conditions for one-dimensional, autonomous ordinary differential equations}
\author{U. S. Fjordholm}
\maketitle
\begin{abstract}
We give two conditions that are necessary and sufficient for the uniqueness of Filippov solutions of scalar, autonomous ordinary differential equations with discontinuous velocity fields. When only one of the two conditions is satisfied, we give a natural selection criterion that guarantees uniqueness of the solution.
\end{abstract}

\section{Introduction and statement of the theorem}
The purpose of this note is to derive necessary and sufficient conditions for the uniqueness of Filippov solutions of the scalar, autonomous ordinary differential equation (ODE)
\begin{equation}\label{eq:ode}
\begin{split}
\frac{dX}{dt}(t) &= b(X(t)) \qquad\text{for }t>0  \\
X(0)&=x_0
\end{split}
\end{equation}
where $b:\R\to\R$ is Borel measurable and locally bounded, and $x_0\in\R$. If $b$ is continuous then the sense in which \eqref{eq:ode} holds is classical: $X:[0,\infty)\to\R$ is absolutely continuous and $\frac{d}{dt}X(t)=b(X(t))$ holds for almost every $t>0$. It was shown by Binding \cite{Bin79} that the solution is unique \emph{if and only if} $b$ satisfies the so-called Osgood condition at all zeroes of $b$ (see below). For instance, any Lipschitz continuous $b$ satisfies Osgood's condition. For a general reference on the uniqueness and non-uniqueness of ODEs, see \cite{AgarwalLakshmi}.

If $b$ is merely measurable, say, $b\in L^\infty(\R)$ then the interpretation of \eqref{eq:ode} is more subtle, and choosing a different representative in the equivalence class of $b$ can lead to very different solutions. For instance, redefining the constant velocity field $b(x)\equiv 1$ at a single point, $b(x_0)=0$, yields both the solutions $X(t)\equiv x_0$ and $X(t)=x_0+t$. Several authors have analyzed possible modifications of $b$ on negligible sets in order to ensure existence of a classical solution, see e.g.~\cite{Sho91}. The concept of \emph{Filippov flows} or \emph{Filippov solutions} of \eqref{eq:ode} provides an alternative solution to this issue by choosing a canonical representation of the velocity field. More precisely, the differential equation \eqref{eq:ode} is replaced by a differential \emph{inclusion} where the right-hand side contains information on the behavior of $b$ in an infinitesimal neighborhood of $X(t)$. Filippov \cite{Fil60} showed that there exists a Filippov solution of \eqref{eq:ode} under very mild conditions on $b$, for instance if $b\in L^\infty(\R)$ or, for local existence, $b\in L^\infty_\loc(\R)$.

The main theorem of this paper, stated in Section \ref{sec:mainthm}, gives necessary and sufficient conditions for the uniqueness of Filippov solutions of \eqref{eq:ode}. As a corollary we define a class of functions $\tilde{b}:\R\to\R$ for which the corresponding ODE all have the same unique, \emph{classical} solution.

In Section \ref{sec:filippovsoln} we provide the definition of Filippov solutions and in Section \ref{sec:osgood} we describe the essential Osgood criterion. Section \ref{sec:proof} contains the proof of the Theorem and its Corollary, while Section \ref{sec:examples} lists some examples.


\subsection{Set-valued functions and Filippov solutions}\label{sec:filippovsoln}
We say that an absolutely continuous function $X:[0,T)\to\R$ is a \emph{Filippov solution} of \eqref{eq:ode} if $X(0)=x_0$ and
\[
\frac{dX}{dt}(t) \in K[b](X(t)) \qquad \text{for a.e. } t\in(0,T)
\]
(see \cite{Fil60}). Here, the set-valued function $K[b]$ is defined as
\[
K[b](x) := \bigcap_{\delta>0}\bigcap_{\substack{N\subset\R \\ |N|=0}} \conv\big(b\big(B_\delta(x)\setminus N\big)\big)
\]
where $B_\delta(x)$ is the open ball around $x$ with radius $\delta$, and $\conv(A)$ is the smallest closed, convex set containing $A$. 
In a similar vein we define the essential upper and lower bounds of $b$ at $x$ as
\begin{equation}\label{eq:defessminmax}
\begin{split}
m[b](x) := \min\big(K[b](x)\big) = \lim_{\delta\to0}\essinf_{x'\in B_\delta(x)} b(x'), \\
M[b](x) := \max\big(K[b](x)\big) = \lim_{\delta\to0}\esssup_{x'\in B_\delta(x)} b(x').
\end{split}
\end{equation}
We will say that $b$ is continuous at a point $x$ if the set $K[b](x)$ contains a single point, otherwise we say that $b$ is discontinuous at $x$. It is evident that this coincides with the usual definition of continuity at a point, possibly after redefining $b$ on a negligible set. 


We list below some properties that are straightforward to check (see also \cite{AubFra,Dei}):
\begin{enumerate}[label=(\roman*)]
\item $K[b]$ is upper hemicontinuous.
\item If $0\notin K[b](x)$ for some $x\in\R$ then there is a neighborhood $U$ of $x$ such that $0 \notin K[b](y)$ for every $y\in U$.
\item $m[b]$ and $M[b]$ are lower and upper semi-continuous, respectively.
\item The set of discontinuities of $b$ coincides with the measurable set $\{x\ :\ m[b](x)<M[b](x)\}$.
\end{enumerate}

\subsection{The Osgood condition}\label{sec:osgood}
The classical uniqueness result for ODEs requires Lipschitz continuity of the velocity field $b$. In 1898 Osgood relaxed this condition to mere continuity of $b$ along with an integrability condition on its reciprocal \cite{Osg98}. We recall the main idea of Osgood's condition here. We will call a function $g:(-\delta_0,\delta_0)\to[0,\infty)$ an \emph{Osgood function} if it is nonnegative, Borel measurable and satisfies 
\begin{equation}\label{eq:osgoodfunc}
\int_{-\delta}^0 g(z)^{-1}dz = +\infty, \qquad \int_0^\delta g(z)^{-1}dz = +\infty \qquad \forall\ \delta\in(0,\delta_0).
\end{equation}

\begin{lemma}[Osgood lemma]\label{lem:osgood}
Let $g:(-\delta_0,\delta_0)\to[0,\infty)$ be an Osgood function and let $u:[0,T)\to(-\delta_0,\delta_0)$ be an absolutely continuous function satisfying $u(0)=0$ and
\begin{equation}\label{eq:osgoodcondition}
\frac{d}{dt}|u(t)| \leq g(u(t)) \qquad \text{for a.e. } t\in(0,T).
\end{equation}
Then $u \equiv 0$.
\end{lemma}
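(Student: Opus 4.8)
The plan is to argue by contradiction, reducing the general absolutely continuous solution to a \emph{monotone} one, to which a change-of-variables argument together with the divergence in \eqref{eq:osgoodfunc} can be applied. Suppose $u\not\equiv0$, so that $u(t^*)\neq0$ for some $t^*\in(0,T)$. Replacing $u$ by $-u$ and $g$ by $z\mapsto g(-z)$ if necessary --- a substitution that preserves both \eqref{eq:osgoodcondition} and the Osgood property \eqref{eq:osgoodfunc}, since the latter is symmetric in its two one-sided integrals --- I may assume $a:=u(t^*)>0$.

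The first difficulty is that \eqref{eq:osgoodcondition} only bounds $\frac{d}{dt}|u|$, hence only bounds $u'$ from \emph{above} on the set where $u>0$; it says nothing about how fast $u$ may decrease, so $u$ need not be monotone and $|u'|$ is uncontrolled. To circumvent this I would pass to the running maximum $w(t):=\max_{s\in[0,t]}u(s)$. One checks that $w$ is nondecreasing with $w(0)=0$ and $w(t^*)\ge a$, and that $w$ is absolutely continuous: for $t_1<t_2$ the increment $w(t_2)-w(t_1)$ is either $0$ or at most $u(s)-u(t_1)\le\int_{t_1}^{t_2}(u')^+\,dt$ for some $s\in(t_1,t_2]$, and absolute continuity of the integral of $|u'|\in L^1$ then yields absolute continuity of $w$. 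Setting $t_0:=\sup\{t:w(t)=0\}$, I have $w>0$ on $(t_0,t^*]$.

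The key point is that $w$ inherits the differential inequality in the form $w'(t)\le g(w(t))$ for a.e.\ $t\in(t_0,t^*)$. On the open set $\{w>u\}$ the function $w$ is locally constant, so $w'=0\le g(w)$ there; while at a.e.\ point of the contact set $\{w=u\}$ one has $w'\le u'$ (compare $w(t)-w(t-h)\le u(t)-u(t-h)$ for $h>0$ and let $h\to0^+$), and since $w=u>0$ there, \eqref{eq:osgoodcondition} gives $u'=\frac{d}{dt}|u|\le g(u)=g(w)$. Now $w$ is monotone, so the change-of-variables formula for absolutely continuous substitutions applies cleanly. For the bounded Borel functions $f_n:=1/\max(g,1/n)$ I obtain
\[
\int_0^{w(t^*)} f_n(y)\,dy \;=\; \int_{t_0}^{t^*} f_n(w(t))\,w'(t)\,dt \;\le\; t^*-t_0,
\]
the last inequality because $f_n(w)\,w'\le1$ a.e.: where $g(w)>0$ this follows from $w'\le g(w)$, and where $g(w)=0$ one has $w'=0$. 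Letting $n\to\infty$ and using monotone convergence, together with $w(t^*)\in(0,\delta_0)$ (as $u$ takes values in $(-\delta_0,\delta_0)$), the left-hand side tends to $\int_0^{w(t^*)}g(y)^{-1}\,dy=+\infty$ by \eqref{eq:osgoodfunc}, contradicting the finite bound $t^*-t_0$. Hence $u\equiv0$.

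I expect the main obstacle to be exactly this reduction to a monotone function: verifying that the running maximum is absolutely continuous and still satisfies $w'\le g(w)$ almost everywhere. A naive change of variables applied to $u$ itself fails, since the Banach indicatrix of $u$ would pick up its uncontrolled downward excursions. The truncation $f_n$ is a minor but necessary device to accommodate the possibility that $g$ vanishes on a set of positive measure, where $1/g=+\infty$.
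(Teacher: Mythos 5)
Your proof is correct, and the core mechanism is the same as the paper's --- integrate a reciprocal of the differential inequality, change variables, and contradict the divergence in \eqref{eq:osgoodfunc} --- but your technical route is genuinely different. The paper works directly with $u$: it picks $t_1$ with $u(t_1)>0$, takes $t_0$ to be the last zero before $t_1$ (so $u>0$ on $(t_0,t_1]$ and $\frac{d}{dt}|u|=u'$ there), and integrates $u'/(g(u)+\eps)\leq 1$ over $(t_0,t_1)$, using the change-of-variables identity $\int_{t_0}^{t_1}\frac{u'(t)}{g(u(t))+\eps}\,dt=\int_0^{u(t_1)}\frac{dz}{g(z)+\eps}$ and letting $\eps\to0$. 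Your stated reason for avoiding this --- that the change of variables ``fails'' for non-monotone $u$ because of the Banach indicatrix --- is a misconception: the indicatrix appears in the area formula $\int f(u)\,|u'|\,dt=\int f(z)N(z)\,dz$, but the \emph{signed} identity $\int_a^b f(u(t))\,u'(t)\,dt=\int_{u(a)}^{u(b)}f(z)\,dz$ holds for any absolutely continuous $u$ and bounded Borel $f$ (write $F(x)=\int_0^x f$, note $F$ is Lipschitz, so $F\circ u$ is absolutely continuous with $(F\circ u)'=f(u)\,u'$ a.e.); the downward excursions cancel exactly. So your detour through the running maximum $w(t)=\max_{s\in[0,t]}u(s)$ is unnecessary, but it is carried out correctly and is a legitimate alternative: the verification that $w$ is absolutely continuous, that $w'=0$ on the open set $\{w>u\}$, and that $w'\leq u'\leq g(w)$ a.e.\ on the contact set $\{w=u\}$ are all sound, as is the truncation $f_n=1/\max(g,1/n)$ with monotone convergence (which plays the same role as the paper's additive regularization $g+\eps$). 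What your approach buys is that every analytic step is elementary --- change of variables is only ever applied to a \emph{monotone} absolutely continuous function; what it costs is the extra layer of lemmas about the running maximum, which the paper's two-line argument does without.
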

\begin{proof}
Assume conversely that, say, $u(t_1)>0$ for some $t_1>0$, and let $t_0\in[0,t_1)$ be such that $u(t_0)=0$ but $u(t)>0$ for all $t\in(t_0,t_1]$. From \eqref{eq:osgoodcondition} it follows in particular that $\frac{du}{dt}(t) \leq g(u(t))+\eps$ for every $\eps>0$ and a.e.\ $t\in(t_0,t_1)$. Dividing this inequality by the right-hand side and integrating over $t\in(t_0,t_1)$ yields
\[
t_1-t_0 \geq \int_{t_0}^{t_1} \frac{\frac{du}{dt}(t)}{g(u(t))+\eps}dt = \int_0^{u(t_1)}\frac{1}{g(z)+\eps}dz.
\]
But the right-hand side goes to $+\infty$ as $\eps\to0$, a contradiction.
\end{proof}

\subsection{The main theorem}\label{sec:mainthm}
In Section \ref{sec:proof} we prove the following uniqueness result. We recall from \cite{Fil60} that if $b\in L^\infty_\loc(\R)$ then there exists at least one local-in-time Filippov solution of \eqref{eq:ode}.
\begin{theorem}\label{thm:uniqueode1d}
Assume that $b\in L^\infty_\loc(\R)$ satisfies the following two conditions:
\begin{enumerate}[label=(\Alph*)]
\item\label{item:discontcond} the set 
\begin{equation}\label{eq:discontcondition}
\big\{x\in\R\ :\ 0 \notin K[b](x) \text{ and $b$ is discontinuous at $x$} \big\}
\end{equation}
has zero Lebesgue measure
\item\label{item:osl} for every $x\in\R$ where $0\in K[b](x)$, the function
\begin{equation}\label{eq:osl}
g(z):=M\big[b_x^+\big](z), \qquad \text{where } b_x^+(z) := \big(b(x+z)\sgn(z)\big)^+
\end{equation}
is an Osgood function.  (Here, $u^+ = \max(0,u)$.)
\end{enumerate}
Then the Filippov solution of \eqref{eq:ode} is unique. Conversely, if one of the conditions \ref{item:discontcond} or \ref{item:osl} does not hold then there is some $x_0\in\R$ for which there are uncountably many Filippov solutions.
\end{theorem}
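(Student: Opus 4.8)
The plan is to prove both directions of the equivalence. For the forward (sufficiency) direction, I would argue uniqueness by a contradiction/comparison argument anchored at the zeros of the velocity field. The key observation is that, by condition (A), the set where $b$ is genuinely discontinuous and nonvanishing is negligible, so along a Filippov solution the time spent in this bad set contributes nothing to the dynamics. Away from zeros of $K[b]$, property (ii) in Section \ref{sec:filippovsoln} tells us that $0\notin K[b](y)$ on a neighborhood, so the sign of the velocity is locally constant and the solution is strictly monotone there; monotone trajectories through a region free of zeros are forced to agree because, up to the negligible discontinuity set, the reciprocal velocity $1/b$ is integrable and the travel time between two levels is determined uniquely. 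The delicate points are the zeros of $K[b]$: these are exactly the stationary points where branching could occur. Here I would invoke condition (B) together with the Osgood Lemma \ref{lem:osgood}. Given two Filippov solutions $X_1, X_2$ emanating from the same $x_0$, I would set $u(t) = X_1(t) - X_2(t)$ (or, when $x_0$ is a zero, compare a single solution against the constant solution) and show that $\frac{d}{dt}|u| \le g(u)$ with $g$ the Osgood function from \eqref{eq:osl}; the comparison then forces $u\equiv 0$.

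More carefully, for the case where $x_0$ is itself a zero of $K[b]$, I would translate so that $x_0 = 0$ and compare an arbitrary Filippov solution $X$ with the constant solution $X\equiv 0$. Because $\dot X(t)\in K[b](X(t))$, and because $K[b]$ at a point is sandwiched by $m[b]$ and $M[b]$, I can bound $\frac{d}{dt}|X(t)| = \sgn(X(t))\dot X(t) \le M[b_x^+](X(t)) = g(X(t))$ for a.e.\ $t$, using that $b(x+z)\sgn(z)$ controls the outward speed and that $K[b]$ is the convexification so its extreme values are $m$ and $M$. The Osgood Lemma then yields $X\equiv 0$, establishing that no solution can leave a stationary point, which is the heart of uniqueness. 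For a general initial point $x_0$ that is not a zero, I would show every Filippov solution is strictly monotone until it reaches a zero of $K[b]$, and that up to that time the trajectory is pinned down by inverting the monotone travel-time map, so that two solutions must coincide up to the first hitting time of a zero; at that zero the previous argument applies to continue the agreement.

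For the converse (necessity), I would show that the failure of either condition produces non-uniqueness, and in fact uncountably many solutions. If condition (B) fails, there is a zero $x_0$ of $K[b]$ at which the Osgood integral \eqref{eq:osgoodfunc} converges on one side; one then builds, by solving the scalar autonomous ODE with the one-sided comparison velocity, a solution that leaves $x_0$ in finite time, and by delaying the departure time one obtains a one-parameter (hence uncountable) family of distinct Filippov solutions all starting at $x_0$. If condition (A) fails, the set in \eqref{eq:discontcondition} has positive measure, so there is a positive-measure set of points where $b$ is discontinuous and $0\notin K[b]$; here the multivaluedness of $K[b]$ on a non-negligible set gives genuine freedom in selecting $\dot X(t)$ within the interval $[m[b](X(t)), M[b](X(t))]$, and a measurable-selection argument produces uncountably many solutions passing through a suitable $x_0$.

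I expect the main obstacle to be the necessity argument under the failure of condition (A), and more precisely the construction that converts the positive-measure, multivalued structure of $K[b]$ into an explicit uncountable family of absolutely continuous Filippov solutions. The difficulty is that the set of bad points need not be an interval, so one cannot simply integrate a scalar comparison ODE; instead one must carefully parametrize admissible velocity selections $v(t)\in K[b](X(t))$ that remain measurable and yield distinct trajectories, while verifying that each candidate is genuinely a Filippov solution (in particular absolutely continuous and satisfying the inclusion a.e.). Handling the interplay between the Lebesgue structure on the state variable and the time variable — via the change of variables induced by monotonicity where it holds, and via a more careful selection where monotonicity fails — is the technically subtle part of the proof.
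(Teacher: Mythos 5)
Your sufficiency argument and your necessity argument for condition \ref{item:osl} track the paper's proof in substance: the Osgood comparison of an arbitrary solution against the constant solution at zeros of $K[b]$ is exactly the paper's Case 1; your travel-time rigidity away from zeros is equivalent to the paper's argument, which sets $X(\tau(t))=Y(t)$ and uses condition \ref{item:discontcond} plus strict monotonicity to conclude $\tau'\equiv1$; and your delayed-departure family is exactly the paper's construction when the one-sided Osgood integral converges.

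The genuine gap is the step you yourself single out: necessity of \ref{item:discontcond}. Your plan --- parametrize measurable velocity selections $v(t)\in K[b](X(t))$ in \emph{time} --- is the hard way around, and you leave it unresolved. The paper's construction selects in \emph{space} instead, which makes the problem elementary. Writing $D^{\pm}$ for the sets of discontinuity points $x$ with $K[b](x)\subset(0,\infty)$ (resp.\ $(-\infty,0)$), failure of \ref{item:discontcond} means, say, $|D^+|>0$. An elementary lemma (Lemma \ref{lem:positivedensity}) produces a point $x_0$ such that $\big|[x_0,x_0+\delta)\cap D^+\big|>0$ for \emph{every} $\delta>0$; this right-density of the bad set at $x_0$ is what makes the solutions below differ immediately after $t=0$ rather than only eventually. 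Near $x_0$ one has $K[b](x)\subset[c,\infty)$ with $c>0$. Now choose measurable functions $b_1,b_2$ of the \emph{state} variable with $m[b]\leq b_1\leq b_2\leq M[b]$ pointwise and $b_1<b_2$ on $[x_0,x_0+\delta_0)\cap D^+$ (uncountably many such pairs exist), and define
\[
X_i(t)=G_i^{-1}(t),\qquad G_i(x)=\int_{x_0}^x\frac{dy}{b_i(y)}.
\]
Since $c\leq b_i\leq\|b\|_{L^\infty(B_{\delta_0}(x_0))}$, each $G_i$ is bi-Lipschitz and strictly increasing, so $X_i$ is absolutely continuous and satisfies $\frac{dX_i}{dt}(t)=b_i(X_i(t))\in\big[m[b](X_i(t)),M[b](X_i(t))\big]=K[b](X_i(t))$ for a.e.\ $t$; that is, each $X_i$ is a Filippov solution, and there is no time-dependent selection issue to address at all. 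Finally, $G_1(x)>G_2(x)$ for every $x>x_0$, because $1/b_1\geq1/b_2$ with strict inequality on a set of positive measure in every interval $[x_0,x_0+\delta)$; hence $X_1\neq X_2$ on every interval $(0,\eps)$. This spatial-selection-plus-travel-time-inversion device, anchored at a point supplied by the density lemma, is the missing idea; without it (or an equivalent), your necessity argument for \ref{item:discontcond} remains a statement of intent rather than a proof.
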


If $b$ is continuous then the Theorem reduces to \cite[Theorem 5.3]{Bin79}. Indeed, if $b$ is continuous then the set \eqref{eq:discontcondition} is empty, and condition \ref{item:osl} is equivalent to saying that $b_x^+$ is an Osgood function whenever $b(x)=0$.

Condition \ref{item:discontcond} addresses a deficiency in the concept of Filippov solutions: Roughly speaking, if discontinuities in $b$ are too densely packed then the set-valued function $K[b]$ is unable to ``see'' the (almost everywhere defined) function $b$. An example where \ref{item:discontcond} is violated is given in Section \ref{sec:examples}. The following corollary shows that when \ref{item:discontcond} is violated, the additional requirement of being a classical solution can act as a selection criterion among the infinitely many solutions.

\begin{corollary}
Assume that $b\in L^\infty_\loc(\R)$ satisfies condition \ref{item:osl}. Define
\begin{equation*}
\begin{split}
\mathcal{L}_b := \Big\{\tilde{b}:\R\to\R\ :\ \ &b=\tilde{b} \text{ a.e.,}\quad \tilde{b}(x)\in K[b](x)\ \forall\ x\in\R,\\
&0\in K[b](x)\ \Rightarrow\ \tilde{b}(x)=0 \Big\}.
\end{split}
\end{equation*}
Then for every $x_0\in\R$ and $\tilde{b}\in\mathcal{L}_b$ there is a unique \emph{classical} solution of the ODE
\begin{equation}\label{eq:ode2}
\begin{split}
\frac{dX}{dt}(t) &= \tilde{b}(X(t)) \qquad\text{for }t>0  \\
X(0)&=x_0
\end{split}
\end{equation}
and this solution is independent of the choice of $\tilde{b}\in\mathcal{L}_b$.
\end{corollary}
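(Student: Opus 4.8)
The plan is to exploit the sign structure that every $\tilde b\in\mathcal L_b$ inherits from $K[b]$, and to reduce any solution to two local regimes. First I would record that $K[\tilde b]=K[b]$ (hence $m[\tilde b]=m[b]$, $M[\tilde b]=M[b]$) for every $\tilde b\in\mathcal L_b$, since these objects depend only on the a.e.\ class. Writing $Z:=\{x:0\in K[b](x)\}$, $U^+:=\{x:m[b](x)>0\}$ and $U^-:=\{x:M[b](x)<0\}$, the semicontinuity of $m[b],M[b]$ shows that $U^\pm$ are open, that $Z$ is closed with $\R=Z\cup U^+\cup U^-$ a disjoint union, and that $\partial U^\pm\subseteq Z$. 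By definition of $\mathcal L_b$ one has $\tilde b>0$ on $U^+$, $\tilde b<0$ on $U^-$, $\tilde b\equiv 0$ on $Z$, and $m[b]\le\tilde b\le M[b]$ throughout; in particular any classical solution of \eqref{eq:ode2} satisfies $\frac{dX}{dt}\in K[b](X)$ a.e., so it is a Filippov solution of \eqref{eq:ode} and is locally Lipschitz since $\tilde b$ is locally bounded.

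To obtain uniqueness and independence at once, I would take two classical solutions $X_1,X_2$ of \eqref{eq:ode2} for possibly different fields $\tilde b_1,\tilde b_2\in\mathcal L_b$ with the same datum, set $t_*:=\sup\{\tau:X_1\equiv X_2\text{ on }[0,\tau]\}$, and show that the assumption $t_*<T$ is contradictory. Let $x_*:=X_1(t_*)=X_2(t_*)$. If $x_*\in U^+$ (the case $U^-$ being symmetric), both solutions are strictly increasing near $t_*$ with $c\le\tilde b_i\le C$ for some $c>0$, $C<\infty$ on a compact neighbourhood, hence bi-Lipschitz; introducing the time map $T(x):=\int_{x_*}^x\tilde b(y)^{-1}\,dy$, which is independent of the representative because $\tilde b_1=\tilde b_2$ a.e., the chain rule (valid a.e.\ by the bi-Lipschitz change of variables, which sends the null set where $T'\neq\tilde b_i^{-1}$ to a null set in time) gives $\frac{d}{ds}T(X_i(t_*+s))=1$, whence $X_i(t_*+s)=T^{-1}(s)$ for both $i$. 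This forces $X_1\equiv X_2$ on a right-neighbourhood of $t_*$, contradicting the choice of $t_*$.

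If instead $x_*\in Z$, condition \ref{item:osl} applies at $x_*$: the function $g(z)=M[b_{x_*}^+](z)$ is an Osgood function. Using the elementary identities $M[f^+]=(M[f])^+$ and $M[-f]=-m[f]$, one computes $g(z)=(M[b](x_*+z))^+$ for $z>0$ and $g(z)=(-m[b](x_*+z))^+$ for $z<0$. Setting $v_i(s):=X_i(t_*+s)-x_*$, so $v_i(0)=0$, and using $m[b]\le\tilde b_i\le M[b]$, a short case check on the sign of $v_i$ gives $\frac{d}{ds}|v_i(s)|\le g(v_i(s))$ for a.e.\ small $s$. Lemma \ref{lem:osgood} then yields $v_i\equiv 0$, again forcing $X_1\equiv X_2$ near $t_*$ and contradicting the choice of $t_*$. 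Hence $X_1\equiv X_2$, giving both uniqueness for fixed $\tilde b$ and independence of the choice of $\tilde b\in\mathcal L_b$.

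It remains to construct one solution. If $x_0\in Z$ the constant $X\equiv x_0$ works. If $x_0\in U^+$, let $(\alpha,\beta)$ be the component of $U^+$ containing $x_0$ (so $\beta\in Z$ when $\beta<\infty$), set $T(x)=\int_{x_0}^x\tilde b(y)^{-1}\,dy$ and $T_\beta:=\lim_{x\to\beta^-}T(x)\in(0,\infty]$; on $[0,T_\beta)$ define $X:=T^{-1}$, a strictly increasing locally Lipschitz solution, and if $T_\beta<\infty$ (with $\beta<\infty$) extend by $X\equiv\beta$ for $s\ge T_\beta$, which is consistent since $\tilde b(\beta)=0$. The case $x_0\in U^-$ is symmetric. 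Since $T$ and the sets $Z,U^\pm$ depend only on the a.e.\ class of $b$, this $X$ is the same for every $\tilde b\in\mathcal L_b$, and by the previous step it is the unique classical solution. The main difficulty, and the place where condition \ref{item:osl} is essential, is precisely the equilibrium case $x_*\in Z$: it is the Osgood property of $g$ that forbids a solution from leaving $Z$, which is what removes the non-uniqueness that would otherwise persist — even after fixing a representative — when condition \ref{item:discontcond} fails; the secondary technical point is justifying the chain rule in the regular region with only a measurable $\tilde b$, handled by the uniform bounds $c\le\tilde b\le C$ there.
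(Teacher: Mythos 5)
Your proof is correct, and its two local mechanisms are exactly the paper's: inversion of the time map $\int \tilde b(y)^{-1}\,dy$ where $K[b]$ is sign-definite, and \Cref{lem:osgood} at points where $0\in K[b]$. The difference is in the organization, and it is a real one. The paper argues locally at $x_0$: if $0\in K[b](x_0)$ it observes that classical solutions of \eqref{eq:ode2} are Filippov solutions of \eqref{eq:ode} and invokes Case 1 of the Theorem's sufficiency proof (which uses only \ref{item:osl}); if $0\notin K[b](x_0)$ it sets $X=G^{-1}$ with $G(x)=\int_{x_0}^x b(z)^{-1}\,dz$ and asserts that any classical solution satisfies $G(Y(t))=t$; the gluing of these local statements over the whole existence interval is left implicit. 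You instead run a continuation argument at the first separation time $t_*$, based on the decomposition $\R=Z\cup U^+\cup U^-$, which (i) proves uniqueness and independence of the representative $\tilde b\in\mathcal{L}_b$ in one stroke, (ii) explicitly covers trajectories that leave $U^\pm$ and hit the equilibrium set $Z$ in finite time (your existence step, with the extension $X\equiv\beta$ past a component endpoint, addresses the same situation), which the paper's local-at-$x_0$ split does not spell out, and (iii) replaces the appeal to Case 1 of the Theorem by a direct derivation of the Osgood inequality $\frac{d}{ds}|v_i|\leq g(v_i)$ from the pointwise bounds $m[b]\leq\tilde b_i\leq M[b]$ and the identities $M[f^+]=(M[f])^+$, $M[-f]=-m[f]$ --- simpler than the essential-supremum argument the paper needs for genuine Filippov solutions. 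You also make explicit the a.e.\ chain rule via the bi-Lipschitz change of variables, which is precisely what the paper's assertion $G(Y(t))=t$ quietly relies on. The price is length; what it buys is a self-contained, genuinely global proof.
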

As mentioned above, a classical solution is an absolutely continuous function $X:[0,T)\to\R$ satisfying the equation \eqref{eq:ode2} for a.e.~$t\in(0,T)$. The proof of the Corollary is given in Section \ref{sec:proof}.
\begin{remark}
Let $X_t^{x_0}$ denote the (classical or Filippov) solution of \eqref{eq:ode}. Using the fact that \eqref{eq:ode} is time-reversible one can show that if the solution $X_t^{x_0}$ is unique (as in the Theorem and in the Corollary), then the map $x\mapsto X_t^x$ is continuous and surjective for any $t\geq0$. 
\end{remark}

\section{Proof of the main theorem}\label{sec:proof}
\begin{proof}[Proof of sufficiency of \ref{item:discontcond}, \ref{item:osl}]
It is sufficient to show that the solution remains unique up to some time $T>0$. For any given $x_0\in\R$ there are two cases to consider. 

\noindent
\textbf{Case 1:} First, assume that $0 \in K[b](x_0)$ and let $X=X(t)$ be a solution of \eqref{eq:ode}. We wish to show that $X(t)\equiv x_0$, so assume conversely that $X(t)\neq x_0$ for some $t>0$; by translating in time we may assume that $X(t)>x_0$ for all $t$ in some interval $(0,T)$. (The case $X(t)<x_0$ is completely symmetric.) By definition, the function $g$ in \eqref{eq:osl} is nonnegative and upper semi-continuous. By the definition of $K[b](x)$ we have for every $\delta>0$
\[
\frac{dX}{dt}(t) \leq \esssup_{x'\in B_{\delta}(X(t))}b(x'),
\]
and for every $\eps>0$ there is a subset of points $x^*\in B_{\delta}(x(t))$ of positive measure such that
\[
\esssup_{x'\in B_{\delta}(X(t))}b(x') \leq b(x^*) + \eps.
\]
It follows that for almost every $t\in(0,T)$ and for sufficiently small $\delta$,
\begin{align*}
\frac{d}{dt}|X(t)-x_0| &= \frac{dX}{dt}(t) \leq b(x^*) + \eps \\
&= \sgn(x^*-x_0)b(x^*) + \eps  \\
&\leq g(x^*-x_0) + \eps &&\text{{(by \eqref{eq:osl})}.}
\end{align*}
Passing $\delta,\eps\to0$ and using the upper semi-continuity of $g$ we arrive at $\frac{d}{dt}|X(t)-x_0| \leq g(X(t)-x_0)$. Applying \Cref{lem:osgood} yields $X(t)= x_0$ for every $t\in[0,T)$. Thus, the constant solution is unique.

\noindent
\textbf{Case 2:} 
Assume now that $0\notin K[b](x_0)$; since $K[b](x_0)$ is closed and convex we may assume that, say, $K[b](x_0)\subset(0,\infty)$. Then there is a $\delta>0$ such that $K[b](x)\subset[c,\infty)$ for some $c>0$ for every $x\in B_{\delta}(x_0)$. Write $d=\|b\|_{L^\infty(B_{\delta}(x_0))}$. Let now $X,Y$ be two solutions of \eqref{eq:osl}, and fix $T>0$ such that $X(t),Y(t)\in B_{\delta}(x_0)$ for all $t\in[0,T)$. Let $A_X,A_Y\subset [0,T)$ the the sets of differentiability of $X,Y$, respectively, both of which have full measure.

Since $\frac{dX}{dt}(t),\frac{dY}{dt}(t)\in[c,d]$ we have $X(t),Y(t)\in[x_0+ct,x_0+dt]$ for all $t\in[0,T)$ and hence---possibly after decreasing $T$---there is a map $\tau:[0,T)\to[0,T)$ such that $\tau(0)=0$ and $X(\tau(t))=Y(t)$ for all $t\in[0,T)$. Since $X,Y$ are absolutely continuous, so is $\tau$, and moreover $\tau'(t) \geq \frac{d}{c} > 0$. It follows that the set $A = A_Y \cap \tau^{-1}(A_X)\subset[0,T)$ has full measure. Finally, define
\[
E = A \cap \big\{t\in [0,T)\ :\ K[b](Y(t)) \text{ is a singleton} \big\}.
\]
By assumption 1 and the fact that $Y$ is monotone, the set $E$ also has full measure. For \emph{every} $t\in E$ we have now
\[
\frac{dX}{dt}(\tau(t)) = b(X(\tau(t))) = b(Y(t)) = \frac{dY}{dt}(t).
\]
But at the same time $X(\tau(t))=Y(t)$, so that $\frac{dX}{dt}(\tau(t))\tau'(t)=\frac{dY}{dt}(t)$ for a.e.~$t$. It follows that $\tau'(t)\equiv 1$, whence $X(t)=Y(t)$ for all $t$.
\end{proof}

Next, we claim that condition \ref{item:discontcond} is necessary. To this end we need the following elementary result.

\begin{lemma}\label{lem:positivedensity}
Let $U\subset\R$ be an open set and let $K\subset U$ be a measurable set with $|K|>0$. Then there exists a point $x_0\in U$ such that $|[x_0,x_0+\delta)\cap K| > 0$ for every $\delta>0$.
\end{lemma}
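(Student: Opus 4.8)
The plan is to locate $x_0$ as a Lebesgue density point of $K$ and then check that the required one-sided condition follows automatically. First I would invoke the Lebesgue density theorem: since $K$ is measurable, almost every point of $K$ is a point of density $1$, meaning
\[
\lim_{\delta\to0^+}\frac{|K\cap(x-\delta,x+\delta)|}{2\delta}=1 .
\]
Because $|K|>0$, the set of such density points has positive measure and is in particular nonempty, so I may fix one and call it $x_0$. Since $K\subset U$ we automatically have $x_0\in U$ (in fact the openness of $U$ plays no role in the argument, it merely guarantees $x_0$ lands in $U$).

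Next I would convert this two-sided density statement into the one-sided statement we actually need. Writing $\ell(\delta):=|K\cap(x_0-\delta,x_0)|$ and $r(\delta):=|K\cap[x_0,x_0+\delta)|$, the single point $x_0$ is negligible, so $\ell(\delta)+r(\delta)=|K\cap(x_0-\delta,x_0+\delta)|$. Both $\ell(\delta)$ and $r(\delta)$ are bounded above by $\delta$, while density $1$ gives $\ell(\delta)+r(\delta)=2\delta\big(1-o(1)\big)$ as $\delta\to0^+$. Subtracting the trivial bound $\ell(\delta)\le\delta$ yields $r(\delta)\ge 2\delta\big(1-o(1)\big)-\delta=\delta\big(1-o(1)\big)$, which is strictly positive for all sufficiently small $\delta>0$. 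Hence $|[x_0,x_0+\delta)\cap K|=r(\delta)>0$ for small $\delta$.

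Finally I would remove the smallness restriction by monotonicity: the map $\delta\mapsto r(\delta)=|[x_0,x_0+\delta)\cap K|$ is nondecreasing, so positivity for small $\delta$ forces positivity for every $\delta>0$, which is exactly the asserted property of $x_0$. I do not expect a genuine obstacle here; the only step requiring a moment's care is the passage from the symmetric density statement to the asymmetric conclusion, and the crude estimate $\ell(\delta)\le\delta$ disposes of it. (Should one wish to avoid the density theorem, the same conclusion can be reached by a Vitali covering argument applied to the one-sided cover $\{[x,x+\delta):|[x,x+\delta)\cap K|=0\}$ of the ``bad'' set, which shows that the bad points meet $K$ in a Lebesgue-null set, so that almost every point of $K$ works.)
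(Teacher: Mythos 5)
Your proof is correct, but it takes a genuinely different route from the paper. The paper's argument is entirely elementary: it selects an interval $[a,b)\subset U$ with $|[a,b)\cap K|>0$ and sets $x_0=\sup\big\{x\in[a,b)\ :\ |[a,x)\cap K|=0\big\}$; then $|[a,x_0)\cap K|=0$ by continuity of measure from below, while $|[a,x_0+\delta)\cap K|>0$ for every $\delta>0$ by the definition of the supremum (or because the interval already contains $[a,b)$), and subtracting gives the claim. This uses nothing beyond countable additivity of Lebesgue measure; the openness of $U$ enters only to locate the initial interval, and the resulting $x_0$ need not belong to $K$. Your argument invests in a heavier tool---the Lebesgue density theorem (or the Vitali covering alternative you sketch)---but buys a stronger conclusion: almost every point of $K$ has the required one-sided property, and your $x_0$ lies in $K$ itself, which is exactly why openness of $U$ becomes irrelevant in your version. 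Your conversion from two-sided to one-sided density is sound: with $\ell(\delta)\leq\delta$ and $\ell(\delta)+r(\delta)\geq 2\delta(1-\eps(\delta))$ you get $r(\delta)\geq\delta(1-2\eps(\delta))>0$ for small $\delta$, and monotonicity of $\delta\mapsto r(\delta)$ handles all larger $\delta$. Either proof serves the paper's purposes, since the lemma is only invoked to produce a single such point.
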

\begin{proof}
Select an interval $[a,b)\subset U$ such that $|[a,b)\cap K| > 0$. Define
\[
x_0 = \sup\big\{x\in[a,b)\ :\ |[a,x)\cap K| = 0\big\}.
\]
Then $a\leq x_0<b$ so $x_0\in U$, and $|[x_0,x_0+\delta)\cap K| > 0$ for every $\delta>0$.
\end{proof}

\begin{proof}[Proof of necessity of \ref{item:discontcond}]
Assume that \ref{item:discontcond} is \emph{not} satisfied, and define
\begin{align*}
&D :=\big\{x\in\R\ :\ b \text{ is discontinuous at $x$} \big\}, \\
U^- := \big\{x\in\R\ :\ &K[b](x)\subset(-\infty,0)\big\}, \qquad
U^+ := \big\{x\in\R\ :\ K[b](x)\subset(0,\infty)\big\}.
\end{align*}
By assumption, at least one of the sets $D^-:=U^-\cap D$ and $D^+:=U^+\cap D$ has positive measure, so we assume that, say, $|D^+|>0$. Let $x_0\in U^+$ be a point where $\big|[x_0,x_0+\delta)\cap D^+\big| > 0$ for every $\delta>0$ (cf.\ \Cref{lem:positivedensity}). Since $K[b](x_0)\subset(0,\infty)$ there is a $c>0$ and a $\delta_0>0$ such that $K[b](x)\subset[c,\infty)$ for every $|x-x_0|<\delta_0$.
In particular, $c \leq m[b](x) < M[b](x)$ for every $x\in [x_0,x_0+\delta_0)\cap D^+$. Select measurable functions $b_1,b_2$ such that $m[b](x)\leq b_1(x) \leq b_2(x) \leq M[b](x)$ and such that $b_1(x)<b_2(x)$ for every $x\in [x_0,x_0+\delta_0)\cap D^+$. Note that there are uncountably many such pairs of functions. Then the functions $X_1,X_2$ defined by
\[
X_i(t) = G_i^{-1}(t), \qquad G_i(x) := \int_{x_0}^x \frac{1}{b_i(y)}\,dy
\]
are distinct Filippov solutions of \eqref{eq:ode}.
\end{proof}

\begin{proof}[Proof of necessity of \ref{item:osl}]
Let $g$ be as in \eqref{eq:osl}. If condition \ref{item:osl} is not satisfied then necessarily either $\int_{-\delta}^0 g(z)^{-1}\,dz < \infty$ or $\int_0^\delta g(z)^{-1}\,dz < \infty$ for every $0<\delta\leq\delta_0$ for a sufficiently small $\delta_0>0$. Assume the latter case; the former is completely analogous. Necessarily, $g(\delta)>0$ for almost every $\delta\in(0,\delta_0)$, so in particular $g(z) = M\big[b_{x_0}^+\big](z) = M[b](x_0+z)$. Let
\[
G(x) := \int_{0}^{x-x_0}\frac{1}{g(z)}dz, \qquad x\in[x_0,x_0+\delta_0).
\]
Then $G$ is absolutely continuous with $G'(x)=\frac{1}{g(x-x_0)} \geq \frac{1}{\|b\|_{L^\infty}} > 0$, so $G$ is invertible and its inverse $X:=G^{-1}:[0,T)\to [x_0,x_0+\delta_0)$ (for some $T>0$) is also absolutely continuous. Differentiating the relation $\int_0^{X(t)-x_0}\frac{1}{g(z)}dz = t$ and reorganizing, we find that
\[
\frac{dX}{dt}(t) = g\big(X(t)-x_0\big) = M[b]\big(X(t)\big) \in K[b]\big(X(t)\big),
\]
whence $X$ solves \eqref{eq:ode}. Clearly, the trivial solution $X_0(t)\equiv x_0$ is also a solution of \eqref{eq:ode}; hence, any $c>0$ yields a new solution
\[
X_c(t) = \begin{cases}
x_0 & \text{for } 0\leq t\leq c \\
X(t-c) & \text{for } c\leq t < c+T.
\end{cases}
\]
We conclude that there exists a continuum of solutions of \eqref{eq:ode}.
\end{proof}

We conclude this section with the proof of the Corollary.
\begin{proof}[Proof of the Corollary]
From the definition of $\mathcal{L}_b$ it is clear that any classical solution of \eqref{eq:ode2} is also a Filippov solution of \eqref{eq:ode}. Hence, the fact that $b$ satisfies condition \ref{item:osl} implies that if $0\in K[b](x_0)$ then any classical solution must satisfy $X(t) \equiv x_0$. If $0\notin K[b](x_0)$, say, if $K[b](x)\subset[c,\infty)$ for $|x-x_0|<\delta$ for some $c>0$, then define $X(t)=G^{-1}(t)$, where
\[
G(x):=\int_{x_0}^x\frac{1}{b(z)}\,dz = \int_{x_0}^x\frac{1}{\tilde{b}(z)}\,dz.
\]
Then $X$ satisfies \eqref{eq:ode2} in the classical sense for a.e.\ $t$, and is necessarily the only classical solution since any other $Y(t)$ such that $\frac{d}{dt}Y(t)=\tilde{b}(Y(t))$ for a.e.\ $t$ also satisfies $G(Y(t))=t$, whence $X=Y$.
\end{proof}

\section{Examples}\label{sec:examples}

\begin{example}[Velocity fields not satisfying \ref{item:osl}]
Counterexamples to uniqueness of \eqref{eq:ode} when $b$ does not satisfy the Osgood condition are well-known, the most popular ones being $b(x) = |x|^\alpha$ for some $\alpha\in(0,1)$ and the Heaviside function $b(x)=\ind_{(0,\infty)}(x)$. Note that, say, $b(x)=1+|x|^\alpha$, does satisfy conditions \ref{item:discontcond} and \ref{item:osl}, even though it is not (one-sided) Lipschitz bounded. The function $b(x)=-x\log |x|$ is an example of a non-Lipschitz function which does satisfy condition \ref{item:osl}.
\end{example}

\begin{example}[An everywhere discontinuous velocity field]
Let $A\subset\R$ be a Borel set with the following property: For every $x\in\R$ and $\delta>0$, both $|A\cap B_{\delta}(x)|>0$ and $|B_{\delta}(x)\setminus A| > 0$ (see Rudin \cite{Rud83}). Define 
\[
b(x) = \begin{cases} 1 & x\in A\\ 2 & x\notin A.\end{cases}
\]
It is easy to check that $K[b](x) \equiv [1,2]$, and hence $b$ is nowhere continuous. Clearly, $x(t) = x_0+at$ is a Filippov solution of \eqref{eq:ode} for any $a\in[1,2]$. Note that, by the Corollary, there is a unique classical solution for the above velocity field.
\end{example}

\end{document}